\DeclareSymbolFont{calletters}{OMS}{cmsy}{m}{n}
\DeclareSymbolFontAlphabet{\mathcal}{calletters}
\newtheorem{lemma}{Lemma}
\newtheorem{problem}{Problem}
\newtheorem{assumption}{Assumption}
\newtheorem{theorem}{Theorem}
\newtheorem{remark}{Remark}
\newcommand{\modeSymbol}{\sigma}
\newcommand{\A}{\mathcal{A}}
\newcommand{\B}{\mathcal{B}}
\newcommand{\X}{\mathcal{X}}
\newcommand{\W}{\mathcal{W}}
\newcommand{\V}{\mathcal{V}}
\newcommand{\U}{\mathcal{U}}
\newcommand{\R}{\mathbb{R}}
\newcommand{\Z}{\mathcal{Z}}
\newcommand{\I}{\mathcal{I}}
\newcommand{\J}{\mathcal{J}}
\newcommand{\performanceSet}{\mathcal{Z}}
\renewcommand{\J}{\mathcal{J}}
\newcommand{\zeroOneSet}{\mathbb{B}}
\title{\LARGE \bf
Optimal Control for Linear Networked Control Systems with Information Transmission Constraints
}
\author[1]{Antoine Aspeel}
\author[2]{Kwesi Rutledge}
\author[1]{Raphaël M. Jungers}
\author[1]{Benoit Macq}
\author[2]{Necmiye Özay}
\affil[1]{ICTEAM at Université catholique de Louvain, \authorcr Email: {\tt \{antoine.aspeel, raphael.jungers, benoit.macq\}@uclouvain.be}}
\affil[2]{Department of Electrical Engineering and Computer Science at the University of Michigan - Ann Arbor, \authorcr Email: {\tt \{krutledg, necmiye\}@umich.edu}}
\begin{document}

\maketitle
\thispagestyle{empty}
\pagestyle{empty}

\begin{abstract}
This paper addresses the problem of robust control of a linear discrete-time system subject to bounded disturbances and to measurement and control budget constraints.

Using Q-parameterization and a polytope containment method, we prove that the co-design of an affine feedback controller, a measurement schedule and a control schedule can be exactly formulated as a mixed integer linear program with 2 binary variables per time step. As a consequence, this problem can be solved efficiently, even when an exhaustive search for measurement and control times would have been impossible in a reasonable amount of time.
\end{abstract}


\section{INTRODUCTION}

There are a growing number of situations in the world where controllers of dynamical systems are required to use data acquired over a network to make their decisions. For example, consider smart electrical power grids, which aim to use sensing and prediction of power demand to control the power production of a country. The sensors of such a system would be distributed throughout homes, businesses, and public spaces. Thus, they will likely lie far away from the power production facility and direct connection of the sensors to the decision maker would be impossible. For such dynamical systems, controllers should take into account the properties of the network (e.g. packet drop probability, delay) when selecting their actions.

The areas of \emph{control over networks} and \emph{control of networks} were developed to address this design problem \cite{zhang2020networked}, where \emph{Control over networks} focuses on the design of controllers which are robust to the properties of a network while \emph{Control of Networks} focuses on the design of controllers for all of the nodes on a network such that a collective objective is achieved. In this work, we are interested in making formal guarantees about reachability of a target set when performing control and measurement of a single agent over a network. This objective aligns these results closely with the area of \emph{control over networks}.





\subsection{Related Work}

The obstacles to good performance while implementing a controller over a network are many. Packet losses are one pervasive problem and can be modelled by a probability of control or measurement packet loss during transmission \cite{schenato2007foundations}. Some results in the literature analyze the expected performance of controllers when the network is analyzed in this probabilistic manner. There may also exist delays in the transmission of information \cite[Remark II.4]{heemels2010networked} which can lead the same controller to be stable with one communication protocol/network architecture but be unstable within another. Limited bandwidth is also a problem, as a high frequency of controller and communication updates across the network may overburden it. So, in works like \cite{branicky2002scheduling,trivellato2010state}, adaptive communication protocols are developed to minimize the amount of information that must be sent across the network.

In the literature, two different types of controllers are used to minimize the number of actions (e.g. control or communication actions) that are transmitted by a controller during operation: event-triggered and self-triggered controllers \cite{heemels2012introduction}. In event-triggered controllers, the controller decides to take an action only when the measurement of the state satisfies a certain condition (e.g. state leaves a set). Note that in most event-triggered controllers, the state of the system or a measurement of it is available to the controller at all times \cite{zhang2017overview}, an assumption that we do not allow in this work.

In self-triggered controllers, the controller determines when it will take its next action while computing the current action. These controllers rely on the fact that the self-triggering control action always occurred simultaneously with a self-triggered measurement, which is a convenient but potentially restrictive assumption. This assumption is relaxed in the formulation of this work.

To our knowledge, the method defined in this work is the first derivation of robust output feedback control design with measurement and control budget constraints. Other areas, such as parsimonious control \cite{DEPERSIS2013parsimonious}, also have results defining how to minimize the number of control actions that are taken in a distributed system setting.

In the context of Q-parameterization, the quadratic invariance property has been studied. It has been shown that it is a necessary \cite{lessard2011quadratic} and sufficient \cite{rotkowitz2006characterization} condition for convexity. In this paper, we have similar results for linearity. Then, the combinatorial structure allowing to co-design the controller, the measurement and control times will lead to a \emph{Mixed Integer Linear Program} (MILP) and not only to a mixed integer convex program. Consequently, branch an bounds methods can be used to find an optimal solution efficiently.

Note that some alternatives to Q-parameterization exist, e.g., disturbance rejection control \cite{gao2006active}, or system level synthesis \cite{wang2019systemlevel}.

\subsection{Contribution}
In this paper, we develop a controller which satisfies constraints on the amount of bandwidth used on the network by jointly optimizing (i) a measurement schedule, (ii) a control schedule, and (iii) the controller's gains, while guaranteeing that the output variable remains in a safety set. The controller contains memory and a zero-order hold structure which is partially illustrated in Fig.~\ref{fig:block_diagram}. When the uncertainty sets in this problem are defined as polytopes, this problem can be formulated as a robust optimization using the \emph{polytope containment methods} discussed in \cite{BENTAL19991,sadraddini2020robust}. The output feedback controller, which would normally lead to complex nonconvex constraints can be parameterized linearly in terms of the optimization variables, using the methods of \emph{Q-Parameterization} \cite{skaf2010design,youla1976modern}. Then, the unique binary choices associated with the schedules introduce binary variables, making the optimization problem a MILP which is tractable to solve on practical problems.



\subsection{Paper outline}
We begin this paper by discussing the state of the art and the notation used throughout the paper. In Section \ref{sec:problem_statements}, the main problem we are interested in is defined, and several related problems are presented. In Section \ref{sec:methods}, we propose a solution to the problem using Q-Parameterization and Mixed-Integer Linear Programming. Section \ref{sec:examples} demonstrates our method on two examples and Section \ref{sec:conclusion_futureWorks} concludes and proposes future works. The proofs of the lemmas are in Appendix \ref{sec:proofs}.

\subsection{Notation and Terminology}
The set of real numbers is denoted by $\R$ and the set $\{0,1\}$ is denoted by $\zeroOneSet$. The set of $m\times n$ matrices with non-negative real entries is $\R^{m\times n}_+$. For a matrix $A$, $A^\top$ is the transpose of $A$. The symbol $I_n$ represents the identity matrix of size $n$, $0_{m\times n}$ is the zero $m\times n$ matrix, $\mathbb{1}_n$ is the $n$ dimensional vector of ones. For the sake of brevity, dimensions are sometimes omitted when they can be inferred from compatibility. The Kronecker product is $\otimes$. Inequalities between vectors are considered element-wise.
For $A\in\R^{m\times n}$ and $\I\subset \{1,\dots,m\}$, we write $A_{\I,:}$ the $|\I|\times m$ matrix whose rows correspond to the rows of $A$ with indices in $\I$. The notation $A_{:,\J}$ is used similarly for the columns. For integers $i<j$, we write $i:j=\{i,i+1,\dots,j\}$.

Calligraphic letters represent polytopes (except $\I$ and $\J$ which represent set of indices). For two polytopes $\A$ and $\B$, $\A\times \B$ is their Cartesian product; for a positive integer $n$, $\A^n$ is the $n$-th Cartesian power of $\A$; and $\partial\A$ denotes the boundary of $\A$.

\begin{figure}
\centering
\begin{tikzpicture}[scale=0.6]
\centering
\def\height{2}
\def\vbox{0.5}
\def\xSigma{4}
\def\xlim{6}
\draw (-2,-\height-\vbox) rectangle (2,-\height+\vbox); 
\draw (-\xlim-1,-\vbox) rectangle (\xlim+1,\vbox); 
\draw (-1.5,\height-\vbox) rectangle (1.5,\height+\vbox); 
\draw (-\xSigma-0.75,\height-\vbox) rectangle (-\xSigma+0.75,\height+\vbox); 
\draw (0,-\height) node{Controller};
\draw (0,0) node{Network};
\draw (0,\height) node{Plant};
\draw (-\xSigma,\height) node{ZOH};
\draw [->,>=latex] (-\xSigma+0.75,\height) --node[above]{$u_t$} (-1.5,\height);
\draw (\xSigma-0.5,\height) node{${\scriptstyle\bullet}$};
\draw (\xSigma+0.5,\height) node{${\scriptstyle\bullet}$};
\draw (1.5,\height) -- (\xSigma-0.5,\height) -- (\xSigma+0.5,\height+0.5);
\draw [->,>=latex] (\xSigma,\height+1) node[above]{$\modeSymbol^m_t$} -- (\xSigma,\height+0.25);
\draw (\xSigma+0.5,\height) -- (\xlim,\height) --node[above right]{$y_t$} (\xlim,\vbox);
\draw [dashed] (\xlim,\vbox) -- (\xlim,-\vbox);
\draw [->,>=latex] (\xlim,-\vbox) -- (\xlim,-\height) -- (2,-\height);
\draw (-\xSigma+0.5,-\height) node{ ${\scriptstyle\bullet}$};
\draw (-\xSigma-0.5,-\height) node{${\scriptstyle\bullet}$};
\draw [->,>=latex] (-\xSigma,-\height-1) node[below]{$\modeSymbol^c_t$} -- (-\xSigma,-\height+0.25);
\draw (-2,-\height) -- (-\xSigma+0.5,-\height) -- (-\xSigma-0.5,-\height+0.5);
\draw (-\xSigma-0.5,-\height) -- (-\xlim,-\height) -- (-\xlim,-\vbox);
\draw [dashed] (-\xlim,-\vbox) -- (-\xlim,\vbox);
\draw [->,>=latex] (-\xlim,\vbox) -- (-\xlim,\height) -- (-\xSigma-0.75,\height);
\draw [->,>=latex] (0,\height+\vbox+1) node[above]{$w_t,v_t$} --(0,\height+\vbox);
\end{tikzpicture}
\caption{Block diagram representing the interaction between the plant \eqref{eq:system_w_missing_meas} and the controller \eqref{eq:control_input}. ZOH means zero-order hold. 
} 
\label{fig:block_diagram}
\end{figure}


\section{PROBLEM STATEMENT}
\label{sec:problem_statements}
We consider the following discrete-time dynamical system
\begin{subequations}
    \label{eq:system_w_missing_meas}
    \begin{align}\label{eq:system_w_missing_meas:x}
        x_{t+1} & = A x_t + B u_t + w_t, \; \; w_t \in \W, \\
    	y_t & =
    	    \begin{cases}
    		    C x_t + v_t, & \modeSymbol^m_t = 1 \\
    			    \emptyset, & \modeSymbol^m_t = 0
    		\end{cases},
    		\; \; v_t \in \mathcal{V},\label{eq:system_w_missing_meas:y}\\
    	z_t &=D x_t + d. \label{eq:system_w_missing_meas:z}
    \end{align}
\end{subequations}
Quantities $w_t$, $v_t$, $x_0$ are unknown but are contained in known sets $\W$, $\V$, $\X_0$, respectively. The measurement scheduling signal $\modeSymbol^m_t \in \zeroOneSet $ determines if a measurement is acquired at time $t$.


We want to design the input signal $u_t$ from the previous measurements $y_t$. We restrict to linear controllers of the form
\begin{equation}\label{eq:control_input}
u_t = \begin{cases}
            f_t + \sum_{\tau\leq t \text{\ s.t.\ }\modeSymbol^m_\tau=1 } F_{(t,\tau)} y_\tau, & \modeSymbol^c_t=1,\\
            u_{t-1}, & \sigma^c_t=0,
      \end{cases}
\end{equation}
with $u_{-1}=0$. Column vectors $x_t$, $y_t$, $z_t$ and $u_t$ are respectively in $\R^{n_x}$, $\R^{n_y}$, $\R^{n_z}$ and $\R^{n_u}$. Other dimensions can be deduced from compatibility. The control scheduling signal $\modeSymbol^c_t\in \zeroOneSet $ determines when a new control input is sent to the plant.

\begin{remark}
When $\modeSymbol^c_t=0$, instead of zero-order hold, we could set the input to zero, i.e., $u_t=0$. Both options are presented in \cite{schenato2007foundations}, while \cite{heemels2010networked} focuses on zero-order hold.
\end{remark}

We are interested in controlling the output $z_t$ during a finite horizon of length $T$. Sensors and actuators are supposed to be constrained in their number of uses. This kind of situation occurs for example to save energy for sensors and actuators. Formally, we have a maximum number of measurements $N_m\geq0$, and a maximum number of new control inputs $N_c\geq0$. It is
\begin{equation}\label{eq:budget}
\sum_{t=0}^{T-1}\modeSymbol^m_t\leq N_m
\text{\; and \; }
\sum_{t=0}^{T-1}\modeSymbol^c_t\leq N_c.
\end{equation}

\begin{remark}
    These constraints can be replaced by any linear constraints, i.e., $\sum_{t=0}^{T-1}\left(c_{it}^m\modeSymbol_t^m + c_{it}^c\modeSymbol_t^c\right)\leq b_i$, for $i=1,\dots,I$ where $c_{it}^m$, $c_{it}^c$ and $b_i$ are known constants. For example, instead of considering two separated budgets, one for the measurements and the other for the controls, we could consider a common budget $\sum_{t=0}^{T-1}\left(c^m\modeSymbol^m_t + c^c\modeSymbol^c_t \right)\leq N$. Budgets over a sliding window of length $\bar{t}\geq 1$ can also be considered, i.e., $\sum_{\tau=t-\bar{t}}^{t-1}\modeSymbol_\tau\leq N$, for $t=\bar{t},\dots,T$.
\end{remark}

A safety set $\performanceSet$ is given for which our objective is to ensure $z_t\in\performanceSet$. In addition, we want the control input to stay inside a known set $\U$, i.e., $u_t\in\U$. Furthermore, we include the following assumption.
\begin{assumption}\label{assumption:polytopes}
    The sets $\X_0$, $\W$, $\V$, $\U$ and $\performanceSet$ are (not necessarily bounded) convex polyhedra.
\end{assumption}

It is assumed to have these polyhedra in H-representation. For $\A \in\{\X_0,\W,\V,\U,\performanceSet\}$, we write $\mathcal{A}=\{x|H_{\mathcal{A}} x\leq h_{\mathcal{A}} \}$ and $n_{\mathcal{A}}$ the number of rows in the matrix $H_{\mathcal{A}}$ and in the column vector $h_{\mathcal{A}}$.

The problem we are interested in is to find measurement times, control times, control gains and control offsets, that keep the output variable $z_t$ in the safety set $\Z$ during the complete horizon, i.e., for $t=0,\dots,T$.

\begin{problem}[Safety]
\label{prob:control_asap}
\begin{equation*}
\text{Find}\ \{\modeSymbol^m_t\}_{t=0}^{T-1},\ \{\modeSymbol^c_t\}_{t=0}^{T-1},\ \{f_t\}_{t=0}^{T-1},\ \{F_{(t,\tau)}\}_{t=0,\ \tau=0}^{T-1,\ t},
\end{equation*}
such that
\begin{itemize}
\item The dynamics \eqref{eq:system_w_missing_meas} hold,
\item The controller \eqref{eq:control_input} is used,
\item $\modeSymbol^m_t,\modeSymbol^c_t\in \zeroOneSet$ for $t=0,\dots,T-1$,
\item Budget constraints (\ref{eq:budget}) hold,
\item For all $w_t\in\W$, $v_t\in\V$,  for $t=0,\dots,T-1$, and for all $x_0\in\X_0$, it holds that $z_t\in\performanceSet$ for $t=0,\dots,T$, and $u_t\in\U$ for $t=0,\dots,T-1$.
\end{itemize}
\end{problem}


\begin{remark}
    \label{remark:problemExtensions}
    It is possible to add an objective function for minimizing a cost related to measurements and controls, e.g., $\sum_{t=0}^{T-1}\left(c_m\modeSymbol^m_t+c_c\modeSymbol^c_t \right)$, where $c_m$ and $c_c$ are given non negative constants.
    
    In addition, one could add the constraint $x_T\in\X_0$. Then, a solution to Problem \ref{prob:control_asap} can be applied periodically to keep $z_t\in\performanceSet$ and $u_t\in\U$ for all $t\geq0$.
\end{remark}

\subsection{Related problem - Safety as long as possible}
In this problem, one wants to keep the output safe, i.e., $z_t\in\performanceSet$ as long as possible. We assume to have an \textit{a priori} upper bound $\bar{T}$ on the largest such time. This can be formalized as follows.
\begin{problem}[Safety as long as possible]\label{prob:control_alap}
\begin{equation*}
\underset{
            \small{
            \begin{array}{c}
                T\in\{0,\dots,\bar{T}\}\\
                \{\modeSymbol^m_t\}_{t=0}^{\bar{T}-1}, \{\modeSymbol^c_t\}_{t=0}^{\bar{T}-1}\\
                \{f_t\}_{t=0}^{\bar{T}-1}, \{F_{(t,\tau)}\}_{t=0,\ \tau=0}^{\bar{T}-1,\ t}
            \end{array} }
        }{\max} T,
\end{equation*}
such that
\begin{itemize}
\item Equations (\ref{eq:system_w_missing_meas}) and (\ref{eq:control_input}) hold for $t=0,\dots,\bar{T}-1$,
\item $\modeSymbol^m_t,\modeSymbol^c_t\in \zeroOneSet$ for $t=0,\dots,\bar{T}-1$,
\item Budget constraints (\ref{eq:budget}) hold with $\bar{T}$ instead of $T$,
\item For all $w_t\in\W$, $v_t\in\V$,  for $t=0,\dots,T-1$, and for all $x_0\in\X_0$, it holds that $z_t\in\performanceSet$ for $t=0,\dots,T$, and $u_t\in\U$ for $t=0,\dots,T-1$.
\end{itemize}
\end{problem}
Problem \ref{prob:control_alap} consists of finding the largest $T$ such that Problem \ref{prob:control_asap} is feasible. Note that if Problem \ref{prob:control_asap} is feasible for some $T'$, then it is also feasible for all $T\leq T'$. Then, a binary search algorithm can be used to find the optimal $T$ by solving $O\left(\log_2(\bar{T})\right)$ instances of Problem \ref{prob:control_asap}. In Subsection \ref{sec:lipm}, we solve Problem \ref{prob:control_alap} on an example.


\section{METHODS}
\label{sec:methods}
In this section, we show that a feasible solution of Problem \ref{prob:control_asap} can be obtained by solving a MILP with $2T$ binary variables. In Subsection \ref{sec:missing_Ff}, we show that missing measurements and controls can be modeled as linear indicator constraints on the gains $F_{(t,\tau)}$ and $f_t$. Subsection \ref{sec:Q_param} recalls the Q-parameterization approach which leads to express the trajectories of $z_t$ and $u_t$ as linear transformation of the uncertainties $w_t$, $v_t$ and $x_0$. It leads to the introduction of new design variables $Q$ and $r$. In Subsection \ref{sec:robustness} we use polytope containment techniques to handle the robustness constraints $z_t\in\performanceSet$ and $u_t\in\U$ for all uncertainties $w_t$, $v_t$ and $x_0$. This leads to linear constraints in terms of the new decision variables $Q$ and $r$. Then, it is shown in Subsection \ref{sec:missing_Qr} that the indicator constraints on $F_{(t,\tau)}$ and $f_t$ for missing measurements and controls can be expressed as linear indicator constraints in terms of the new design variables $Q$ and $r$. Finally, Subsection \ref{sec:mainResult} combines the previous results to prove that Problem \ref{prob:control_asap} is equivalent to a MILP.

\subsection{Missing measurements and controls}\label{sec:missing_Ff}
There are different ways to deal with missing measurements. A common way is to set the measurement matrix $C$ to zero when no measurements are taken, i.e., when $\modeSymbol^m_t=0$, and treat the system as time-varying \cite{jungers2018observability}. In our case, the measurement times are decision variables and such an approach would be equivalent to considering part of the dynamics of the system as a variable of the problem. This would lead to non-linearities.

For this reason, we deal with missing measurements in a different way. We consider that a measurement is available at each time step but that the measurements that should be missing are forbidden to the controller. For this reason, the gains associated with prohibited measurements are set to zero, i.e., $\modeSymbol^m_\tau=0$ implies $F_{(t,\tau)}=0$ for all $t$. Intuitively, we do not consider missing measurements, but prohibited measurements instead. Consequently, the choice of measurement times does not translate as a choice on the dynamics of the system, but as a constraint on the controller's gains. This idea is taken up in the following lemma.

\begin{lemma}[Forbidden measurements]\label{lemma:prohibited}
For given $x_0$, $w_t$, $v_t$, $\modeSymbol^m_t$ and $\modeSymbol^c_t$ for $t=0,\dots,T-1$, the sequences $\{z_t\}_{t=0}^T$ and $\{u_t\}_{t=0}^{T-1}$ generated by (\ref{eq:system_w_missing_meas}) and (\ref{eq:control_input}) are the same as the ones generated by
\begin{subequations}\label{eq:prohibited_dynamics}
    \begin{align}
    x_{t+1} & = A x_t + B u_t + w_t, \label{eq:prohibited_dynamics:x}\\
    y_t &= C x_t + v_t, \label{eq:prohibited_dynamics:y} \\
    z_t &= D x_t + d \label{eq:prohibited_dynamics:z}\\
    u_t &= f_t + \sum_{\tau\leq t} F_{(t,\tau)} y_\tau, \label{eq:prohibited_dynamics:u}
    \end{align}
\end{subequations}
when for all $\tau=0,\dots,T-1$,
\begin{equation}\label{eq:prohibited:measurement}
\modeSymbol^m_\tau=0 \Rightarrow \left( F_{(t,\tau)}=0, \text{\; for\ all\; } t=\tau,\dots T-1 \right),
\end{equation}
and for all $t=0,\dots,T-1$,
\begin{equation}\label{eq:prohibited:control}
\modeSymbol^c_t=0 \Rightarrow
\begin{cases}
f_t=f_{t-1}, \\
F_{(t,\tau)}=F_{(t-1,\tau)},\ \text{for}\ \tau=0,\dots,t
\end{cases}
\end{equation}
with $f_{-1}=0$ and $F_{(-1,\tau)}=F_{(t,t-1)}=0$.
\end{lemma}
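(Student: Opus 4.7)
The plan is to prove the lemma by induction on the time index $t$, showing that both formulations produce identical trajectories $\{u_t\}$, $\{x_t\}$, $\{z_t\}$, and that they produce identical $\{y_t\}$ at every time when $\sigma^m_t = 1$. Since both systems share the same state equation \eqref{eq:prohibited_dynamics:x} (equal to \eqref{eq:system_w_missing_meas:x}) and the same output equation for $z_t$, it suffices to show that the $u_t$ produced by \eqref{eq:prohibited_dynamics:u} under constraints \eqref{eq:prohibited:measurement}--\eqref{eq:prohibited:control} equals the $u_t$ produced by the piecewise controller \eqref{eq:control_input}.

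For the base case $t=0$: if $\sigma^c_0=1$, the expression $u_0 = f_0 + \sum_{\tau \leq 0} F_{(0,\tau)} y_\tau$ from \eqref{eq:prohibited_dynamics:u} reduces to $f_0 + F_{(0,0)} y_0$, and by \eqref{eq:prohibited:measurement} the second term vanishes whenever $\sigma^m_0=0$, matching exactly the conditional sum in \eqref{eq:control_input}; meanwhile $y_0$ agrees with the original in the nontrivial case $\sigma^m_0=1$. If $\sigma^c_0 = 0$, then by \eqref{eq:prohibited:control} together with the conventions $f_{-1}=0$ and $F_{(-1,\tau)}=0$, one has $f_0 = 0$ and $F_{(0,0)} = F_{(-1,0)} = 0$, giving $u_0 = 0 = u_{-1}$, which matches \eqref{eq:control_input}.

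For the inductive step, assume the trajectories agree up through time $t-1$; in particular $x_t$ is the same in both systems, so $y_t$ is the same whenever $\sigma^m_t = 1$. In the case $\sigma^c_t=1$, rewrite the sum in \eqref{eq:prohibited_dynamics:u} by splitting according to $\sigma^m_\tau$; the terms with $\sigma^m_\tau=0$ vanish by \eqref{eq:prohibited:measurement}, and the remaining terms use $y_\tau$ values that coincide with the originals by induction, so $u_t$ matches \eqref{eq:control_input}. In the case $\sigma^c_t=0$, apply \eqref{eq:prohibited:control} to rewrite
\begin{equation*}
u_t = f_{t-1} + \sum_{\tau=0}^{t} F_{(t-1,\tau)} y_\tau = f_{t-1} + \sum_{\tau=0}^{t-1} F_{(t-1,\tau)} y_\tau,
\end{equation*}
where the $\tau=t$ term vanishes by the convention $F_{(t-1,t)}=0$ (interpreting the stated $F_{(t,t-1)}=0$ as the natural convention that a controller cannot use future measurements). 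The right-hand side is precisely the formula \eqref{eq:prohibited_dynamics:u} evaluated at time $t-1$, so by the induction hypothesis it equals $u_{t-1}$, matching the ZOH branch of \eqref{eq:control_input}.

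The argument is essentially bookkeeping, and the main thing to be careful about is the boundary conventions: the zero initializations $f_{-1}=0$, $F_{(-1,\tau)}=0$, and $F_{(t-1,t)}=0$ are exactly what is needed to make the induction go through at the base case and across every ZOH step. Given those conventions, both directions of the claim — forbidden gains simulate missing measurements, and replicated gains simulate ZOH on the control — reduce to one-line algebraic identities inside the induction.
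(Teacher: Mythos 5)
Your proof is correct and follows essentially the same route as the paper's: the identical case split on $\modeSymbol^c_t$, using \eqref{eq:prohibited:measurement} to drop the zero-gain terms when $\modeSymbol^c_t=1$ and \eqref{eq:prohibited:control} plus the boundary conventions to collapse to $u_{t-1}$ when $\modeSymbol^c_t=0$. The only difference is that you make the induction on $t$ (and the resulting agreement of $x_t$ and $y_t$) explicit, which the paper leaves implicit; your reading of the convention $F_{(t,t-1)}=0$ as $F_{(t-1,t)}=0$ is also the intended one.
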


\subsection{Trajectories and Q-parameterization}\label{sec:Q_param}
This section follows the presentation of the Q-parameterization from \cite{rutledge2020finite}. First, let's define the dynamical variables of \eqref{eq:prohibited_dynamics} in terms of trajectories:
\begin{equation}\label{eq:def:f}
\arraycolsep=3.5pt
\begin{array}{ll}
    x = \begin{bmatrix} x_0^\top & x_1^\top & \cdots & x_T^\top \end{bmatrix}^\top, 
    &z = \begin{bmatrix} z_0^\top & z_1^\top & \cdots & z_T^\top \end{bmatrix}^\top,\\
    u = \begin{bmatrix} u_0^\top & u_1^\top & \cdots & u_{T-1}^\top \end{bmatrix}^\top, 
    &w = \begin{bmatrix} w_0^\top & w_1^\top & \cdots & w_{T-1}^\top \end{bmatrix}^\top,\\ 
    v = \begin{bmatrix} v_0^\top & v_1^\top & \cdots & v_{T-1}^\top \end{bmatrix}^\top, 
    &f = \begin{bmatrix} f_0^\top & f_1^\top & \cdots & f_{T-1}^\top\\ \end{bmatrix}^\top, \\ 
    \mathbf{\sigma}^m = \begin{bmatrix} \mathbf{\sigma}^m_0 & \mathbf{\sigma}^m_1 & \cdots & \mathbf{\sigma}^m_{T-1} \end{bmatrix}^\top, 
    &\mathbf{\sigma}^c = \begin{bmatrix} \mathbf{\sigma}^c_0 & \mathbf{\sigma}^c_1 & \cdots & \mathbf{\sigma}^c_{T-1} \\ \end{bmatrix}^\top, 
\end{array}
\end{equation}
\begin{equation}\label{eq:def:F}
F = 
\begin{bmatrix}
    F_{(0,0)} & 0 & 0 & \cdots & 0 \\
    F_{(1,0)} & F_{(1,1)} & 0 & \cdots & 0 \\
    \vdots & \vdots & \vdots & \ddots & \vdots \\
    F_{(T-1,0)} & F_{(T-1,1)} & F_{(T-1,2)} & \cdots & F_{(T-1,T-1)} \\
\end{bmatrix}. 
\end{equation}
Note that $F$ and $f$ characterize the control input and are decision variables. The state trajectory $x$ (along with several other trajectories) is a nonlinear function of the decision variables $F$ and $f$:
\begin{equation}\label{eq:nonlinearEwpressionOfx}
\begin{array}{l}
    x = (H + SF(I-\bar{C}SF)^{-1} \bar{C} H)w + SF(I-\bar{C}SF)^{-1} v + \\
    \quad \quad (I + SF(I-\bar{C}SF)^{-1} \bar{C})Jx_0 + S(I+Q\bar{C}S)f,
\end{array}
\end{equation}
where
$$
J = 
\begin{bmatrix}
    I \\ A \\ A^2 \\ \vdots \\ A^{T}    
\end{bmatrix},\; 
H = 
\begin{bmatrix}
    0_{n_x\times n_x} & 0 & 0 & \cdots & 0 \\
    I & 0 & 0 & \cdots & 0 \\
    A & I & 0 & \cdots & 0 \\
    \vdots & \vdots & \vdots & \ddots & \vdots \\
    A^{T-1} & A^{T-2} & A^{T-3} & \cdots & I
\end{bmatrix},
$$
$$
\bar{C} = \begin{bmatrix}I_T \otimes C  & 0_{Tn_y\times n_x}\end{bmatrix},
$$
$$
S = 
\begin{bmatrix}
    0_{n_x\times n_u} & 0 & 0 & \cdots & 0 \\
    B & 0 & 0 & \cdots & 0 \\
    AB & B & 0 & \cdots & 0 \\
    \vdots & \vdots & \vdots & \ddots & \vdots \\
    A^{T-1}B & A^{T-2}B & A^{T-3}B & \cdots & B
\end{bmatrix}.
$$

Thus, in view of \eqref{eq:nonlinearEwpressionOfx}, efficiently searching the set of feasible gains ($F$ and $f$) for Problem \ref{prob:control_asap} would be a search over a nonconvex set. However,
from \cite[Theorem 2, Equation (17)]{rutledge2020finite}, the following also holds
\begin{equation}\label{eq:elsevier1}
x=(H+SQ\bar{C}H)w+SQv+(I+SQ\bar{C})Jx_0+Sr,
\end{equation}
\begin{equation}\label{eq:elsevier2}
u=Q\bar{C}Hw+Qv+Q\bar{C}Jx_0+r,
\end{equation}
where the following Q-parameterization mapping is used
\begin{align}\label{eq:Q_param:Q}
    Q&=F(I-\bar{C}SF)^{-1},\\
    r&=(I+Q\bar{C}S)f,\label{eq:Q_param:r}
\end{align}
and gives a $n_x\times n_y$ block lower triangular matrix $Q$. Conversely, if $Q$ is $n_x\times n_y$ block lower triangular, then this mapping is invertible and the inverse mapping is
\begin{align}\label{eq:Q_param:F}
F&=(I+Q\bar{C}S)^{-1}Q, \\
f&=(I+Q\bar{C}S)^{-1}r.\label{eq:Q_param:f}
\end{align}
This mapping allows to express our problem in terms of $Q$ and $r$ instead of $F$ and $f$.

In addition, let's write
$$
\bar{D}=I_{T+1}\otimes D, \; \bar{d}=\mathbb{1}_{T+1}\otimes d.
$$
Then, using $z=\bar{D}x+\bar{d}$, (\ref{eq:elsevier1}) and (\ref{eq:elsevier2}), one can write
\begin{equation}\label{eq:P:main}
  \begin{bmatrix}
      z	\\	u
\end{bmatrix}
  =
  \begin{bmatrix}
     P_{z w}	&	P_{z v}  & P_{z x_0}\\
        P_{u w}	&	P_{u v}  & P_{u x_0}
  \end{bmatrix}
  \begin{bmatrix}
     w	\\	v  \\  x_0
\end{bmatrix}
  +
  \begin{bmatrix}
      \tilde{z}	\\	\tilde{u}
\end{bmatrix},
\end{equation}
where
\begin{align}\label{eq:P:submatrices}
&
\arraycolsep=2pt
\begin{array}{rlrlrl}
	P_{z w} = & \bar{D}(H+SQ\bar{C}H), &
    P_{z v} = & \bar{D}SQ, &
    P_{z x_0}=& \bar{D}(I+SQ\bar{C})J, \\
    P_{u w} = & Q\bar{C}H, &
    P_{u v} = & Q, &
    P_{u x_0}=& Q\bar{C}J,
\end{array}\\
&\begin{array}{lcr}
\tilde{z}=\bar{D}Sr+\bar{d} &\text{and}& \tilde{u}=r.
\end{array}\label{eq:x_u_tilde}
\end{align}
Note that these quantities depend linearly on the new decision variables $Q$ and $r$, which is at the heart of Q-parameterization.

\subsection{Robustness constraints by polytope containment}\label{sec:robustness}
Problem \ref{prob:control_asap} contains the robust constraints $z_t\in\performanceSet$ and $u_t\in\U$ for all $w_t$, $v_t$ and $x_0$. To deal with such constraints, we use polytope containment techniques. To this end, we will need the following extension of the Farkas' lemma.
\begin{lemma}[H-Polytope in H-Polytope, \cite{hennet1989extension}]
    \label{lem:hpoly_in_hpoly}
    Let $\mathcal{A} = \{ x \in \mathbb{R}^n \; | \; H_\mathcal{A} x \leq h_\mathcal{A} \}$, $\mathcal{B} = \{ x \in \mathbb{R}^n \; | \; H_\mathcal{B} x \leq h_\mathcal{B} \} \subset \mathbb{R}^n$, $H_\mathcal{A} \in \mathbb{R}^{n_{\mathcal{A}} \times n}$, $H_\mathcal{B} \in \mathbb{R}^{n_{\mathcal{B}} \times n}$. We have $\mathcal{A} \subseteq \mathcal{B}$ if and only if
    \begin{equation*}
        \exists \Lambda \in \mathbb{R}^{n_{\mathcal{B}} \times n_{\mathcal{A}}}_+ \text{ such that }
        \Lambda H_\mathcal{A} = H_\mathcal{B}, \; \Lambda h_\mathcal{A} \leq h_\mathcal{B}.
    \end{equation*}
\end{lemma}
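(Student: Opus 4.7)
The plan is to handle the two implications of the equivalence separately, with the backward direction (sufficiency) being essentially immediate and the forward direction (necessity) reducing to linear-programming duality applied one row of $H_\mathcal{B}$ at a time.

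First I would dispose of sufficiency: assume $\Lambda \in \R_+^{n_\mathcal{B}\times n_\mathcal{A}}$ with $\Lambda H_\mathcal{A}=H_\mathcal{B}$ and $\Lambda h_\mathcal{A}\leq h_\mathcal{B}$ is given. For any $x\in\mathcal{A}$ we have $H_\mathcal{A} x\leq h_\mathcal{A}$, and left-multiplication by the nonnegative matrix $\Lambda$ preserves componentwise inequalities, yielding $H_\mathcal{B} x=\Lambda H_\mathcal{A} x\leq \Lambda h_\mathcal{A}\leq h_\mathcal{B}$, so $x\in\mathcal{B}$. This is a three-line verification.

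Next, for necessity, assume $\mathcal{A}\subseteq\mathcal{B}$ and (for the moment) that $\mathcal{A}\neq\emptyset$. The strategy is to construct $\Lambda$ row by row: for each $i\in\{1,\dots,n_\mathcal{B}\}$, let $H_\mathcal{B}^{(i)}$ and $h_\mathcal{B}^{(i)}$ be the $i$-th row and entry, and consider the linear program
\begin{equation*}
\max_{x\in\R^n}\ H_\mathcal{B}^{(i)} x\quad\text{subject to}\quad H_\mathcal{A} x\leq h_\mathcal{A}.
\end{equation*}
Because every feasible $x$ lies in $\mathcal{A}\subseteq\mathcal{B}$, it satisfies $H_\mathcal{B}^{(i)} x\leq h_\mathcal{B}^{(i)}$, so the primal is feasible and bounded with optimal value at most $h_\mathcal{B}^{(i)}$. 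Strong LP duality then guarantees that the dual
\begin{equation*}
\min_{\lambda\in\R^{n_\mathcal{A}}_+}\ \lambda^\top h_\mathcal{A}\quad\text{subject to}\quad \lambda^\top H_\mathcal{A}=H_\mathcal{B}^{(i)}
\end{equation*}
is feasible and attains the same optimum. Picking any optimal $\lambda_i\geq 0$, I obtain $\lambda_i^\top H_\mathcal{A}=H_\mathcal{B}^{(i)}$ and $\lambda_i^\top h_\mathcal{A}\leq h_\mathcal{B}^{(i)}$. Stacking the row vectors $\lambda_i^\top$ into a matrix $\Lambda$ gives the required certificate.

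The main obstacle — and really the only subtlety — is the degenerate case $\mathcal{A}=\emptyset$, in which the LP duality argument above does not directly produce a $\Lambda$ satisfying the equality $\Lambda H_\mathcal{A}=H_\mathcal{B}$. I would handle this by invoking Farkas' lemma to obtain a vector $\mu\geq 0$ with $\mu^\top H_\mathcal{A}=0$ and $\mu^\top h_\mathcal{A}=-1$, and then adding sufficiently large multiples of $\mu^\top$ to any fixed row that makes the equality $\Lambda H_\mathcal{A}=H_\mathcal{B}$ solvable; this is possible under the standing convention (implicit in the paper's use of H-representations) that each row of $H_\mathcal{B}$ lies in the row space of $H_\mathcal{A}$, or, more cleanly, by first reducing to the nonempty case via a standard perturbation/limit argument. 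Since the result is cited from \cite{hennet1989extension}, I would point to that reference for the full treatment of the empty case and emphasize in the write-up the LP-duality core above, which is what is actually used in the subsequent MILP reformulation.
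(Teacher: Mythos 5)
The paper does not prove this lemma at all; it is imported verbatim from the cited reference, so there is no in-paper argument to compare against. Your core argument is the standard one and is correct where it applies: the sufficiency direction is the three-line monotonicity check you give, and for nonempty $\mathcal{A}$ the row-by-row LP
\(\max\{H_\mathcal{B}^{(i)}x : H_\mathcal{A}x\leq h_\mathcal{A}\}\)
is feasible and bounded above by $h_\mathcal{B}^{(i)}$, so strong duality hands you a nonnegative $\lambda_i$ with $\lambda_i^\top H_\mathcal{A}=H_\mathcal{B}^{(i)}$ and $\lambda_i^\top h_\mathcal{A}\leq h_\mathcal{B}^{(i)}$, and stacking gives $\Lambda$. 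This covers every use of the lemma in the paper, since there $\mathcal{A}=\W^T\times\V^T\times\X_0$ is a nonempty product of uncertainty sets.

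You are right that $\mathcal{A}=\emptyset$ is the only subtlety, but be aware that it is not merely a gap in the proof: the lemma as literally stated is \emph{false} in that case. Take $n=2$, $H_\mathcal{A}=\left[\begin{smallmatrix}1&0\\-1&0\end{smallmatrix}\right]$, $h_\mathcal{A}=\left[\begin{smallmatrix}-1\\-1\end{smallmatrix}\right]$ (so $\mathcal{A}=\emptyset$) and $H_\mathcal{B}=[\,0\ \ 1\,]$, $h_\mathcal{B}=0$; then $\mathcal{A}\subseteq\mathcal{B}$ vacuously, yet $\Lambda H_\mathcal{A}=[\,\lambda_1-\lambda_2\ \ 0\,]$ can never equal $[\,0\ \ 1\,]$. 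Your proposed repair (adding large multiples of a Farkas certificate $\mu\geq0$ with $\mu^\top H_\mathcal{A}=0$, $\mu^\top h_\mathcal{A}<0$) does not close this, both because $H_\mathcal{B}^{(i)}$ need not lie in the row space of $H_\mathcal{A}$ (as above) and because, even when it does as $c^\top H_\mathcal{A}$, nonnegativity of $c+t\mu$ for large $t$ requires $\mu$ to be strictly positive on the negative entries of $c$, which is not guaranteed. The clean fix is simply to add the hypothesis $\mathcal{A}\neq\emptyset$ (or to state the equality $\Lambda H_\mathcal{A}=H_\mathcal{B}$ as an inequality in the appropriate conic sense); for this paper the nonemptiness hypothesis is harmless and your LP-duality proof is then complete.
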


We use Lemma \ref{lem:hpoly_in_hpoly} to prove the following lemma which allows to handle the robustness constraint $z_t\in\performanceSet$.

\begin{lemma}[Safety constraint]\label{lemma:robustness_constraint}
Let the sequence $\{z_t\}_{t=0}^T$ be generated by (\ref{eq:prohibited_dynamics}). The inclusion $z_t\in\performanceSet$ holds for all $t=0,\dots,T$, for all $w_t\in\W$, for all $v_t\in\V$ and for all $x_0\in\X_0$ if and only if there exists $\Lambda\in\mathbb{R}^{(T+1)n_\performanceSet\times (T(n_\W+n_\V)+n_{\X_0})}_+$ such that
\begin{equation}\label{eq:robustness_constraint:1} \setlength\arraycolsep{3pt}
\Lambda \begin{bmatrix} I_T \otimes H_{\W} & 0 & 0 \\ 0 & I_T \otimes H_{\V} & 0 \\ 0 & 0 & H_{\X_0} \end{bmatrix} = \left(I_{T+1}\otimes H_\performanceSet\right) \begin{bmatrix} P_{z w} & P_{z v} & P_{z x_0} \end{bmatrix},
\end{equation}
and
\begin{equation}\label{eq:robustness_constraint:2}
\Lambda \begin{bmatrix} \mathbb{1}_T \otimes h_{\W} \\ \mathbb{1}_T \otimes h_{\V} \\ h_{\X_0} \end{bmatrix} \leq \mathbb{1}_{T+1}\otimes h_\performanceSet - \left(I_{T+1}\otimes H_\performanceSet\right)\tilde{z},
\end{equation}
where $P_{z w}$, $P_{z v}$, $P_{z x_0}$ are defined in (\ref{eq:P:submatrices}) and $\tilde{z}$ is defined in (\ref{eq:x_u_tilde}).
\end{lemma}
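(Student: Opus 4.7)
The plan is to recast the time-indexed robustness statement as a single polytope-containment statement between two polyhedra in the combined uncertainty space, and then invoke Lemma \ref{lem:hpoly_in_hpoly} to get the matrix $\Lambda$ and the two algebraic conditions.

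First I would rewrite the quantified condition ``$z_t\in\performanceSet$ for all $t=0,\dots,T$'' as the single inclusion $z\in\performanceSet^{T+1}$, and similarly bundle the uncertainty constraints into $(w,v,x_0)\in \W^T\times\V^T\times\X_0$. Using Assumption \ref{assumption:polytopes} and the definition of the Cartesian product of H-polyhedra, $\W^T$ has the H-representation $(I_T\otimes H_\W)w\leq \mathbb{1}_T\otimes h_\W$, and analogously for $\V^T$ and $\performanceSet^{T+1}$. Stacking these gives the block-diagonal H-matrix and block-stacked right-hand side that appear on the left of \eqref{eq:robustness_constraint:1} and \eqref{eq:robustness_constraint:2}.

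Next I would exploit \eqref{eq:P:main}, which after projecting onto the $z$-components reads $z=P_{zw}w+P_{zv}v+P_{zx_0}x_0+\tilde z$, to convert the image-containment statement into a preimage-containment statement. Namely, the robustness requirement is equivalent to the inclusion
\[
\A := \W^T\times\V^T\times\X_0 \;\subseteq\; \B,
\]
where $\B$ is the polyhedron in $(w,v,x_0)$-space defined by
\[
(I_{T+1}\otimes H_\performanceSet)\bigl[P_{zw}\ P_{zv}\ P_{zx_0}\bigr]\begin{bmatrix}w\\v\\x_0\end{bmatrix}\leq \mathbb{1}_{T+1}\otimes h_\performanceSet - (I_{T+1}\otimes H_\performanceSet)\tilde z,
\]
since $z\in\performanceSet^{T+1}$ means exactly that the left-hand side above is coordinate-wise dominated by the right-hand side. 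This reformulation is where the linearity in the Q-parameters $Q,r$ is preserved, because $P_{zw},P_{zv},P_{zx_0}$ and $\tilde z$ are all affine in $(Q,r)$ by \eqref{eq:P:submatrices}--\eqref{eq:x_u_tilde}.

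Finally, I would apply Lemma \ref{lem:hpoly_in_hpoly} to the inclusion $\A\subseteq\B$. Matching $H_\A$ with the block-diagonal matrix in \eqref{eq:robustness_constraint:1}, $h_\A$ with the stacked vector in \eqref{eq:robustness_constraint:2}, and $H_\B$, $h_\B$ with the right-hand sides of these two equations immediately produces the stated existence of $\Lambda\in\R_+^{(T+1)n_\performanceSet\times(T(n_\W+n_\V)+n_{\X_0})}$ together with equations \eqref{eq:robustness_constraint:1}--\eqref{eq:robustness_constraint:2}. The only real bookkeeping obstacle is verifying that dimensions line up: $\A$ lives in $\R^{Tn_w+Tn_v+n_x}$ and $\B$ is described by $(T+1)n_\performanceSet$ inequalities, so $\Lambda$ must have the dimensions stated. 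No subtle convex-analytic issue arises beyond the fact that Lemma \ref{lem:hpoly_in_hpoly} requires both polyhedra to share the same ambient space, which the preimage reformulation guarantees.
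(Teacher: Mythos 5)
Your proposal is correct and follows essentially the same route as the paper's own proof: both rewrite the robust safety requirement as the containment of the product polyhedron $\W^T\times\V^T\times\X_0$ in the preimage polyhedron defined by $(I_{T+1}\otimes H_\performanceSet)\bigl[P_{zw}\ P_{zv}\ P_{zx_0}\bigr]\eta\leq \mathbb{1}_{T+1}\otimes h_\performanceSet-(I_{T+1}\otimes H_\performanceSet)\tilde z$, and then apply Lemma \ref{lem:hpoly_in_hpoly}. The dimension bookkeeping and the observation about affinity in $(Q,r)$ match the paper's argument as well.
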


We can state a similar result for the bounded input constraint $u_t\in\U$.

\begin{lemma}[Bounded inputs]\label{lemma:bounded_inputs}
Let the sequence $\{u_t\}_{t=0}^{T-1}$ be generated by (\ref{eq:prohibited_dynamics}). The inclusion $u_t\in\U$ holds for all $t=0,\dots,T-1$, for all $v_t\in\V$, for all $w_t\in\W$ and for all $x_0\in\X_0$ if and only if there exists $\Gamma\in\mathbb{R}^{Tn_\U\times (T(n_\W+n_\V)+n_{\X_0})}_+$ such that
\begin{equation}\label{eq:bounded_inputs:1} \setlength\arraycolsep{4pt}
\Gamma \begin{bmatrix} I_{T} \otimes H_{\W} & 0 & 0 \\ 0 & I_T \otimes H_{\V} & 0 \\ 0 & 0 & H_{\X_0} \end{bmatrix} = \left(I_T\otimes H_\U\right) \begin{bmatrix} P_{u w} & P_{u v} & P_{u x_0} \end{bmatrix},
\end{equation}
and
\begin{equation}\label{eq:bounded_inputs:2}
\Gamma \begin{bmatrix} \mathbb{1}_T \otimes h_{\W} \\ \mathbb{1}_T \otimes h_{\V} \\ h_{\X_0} \end{bmatrix} \leq \mathbb{1}_T\otimes h_\U - \left(I_T\otimes H_\U\right)\tilde{u},
\end{equation}
where $P_{u w}$, $P_{u v}$, $P_{u x_0}$ are defined in (\ref{eq:P:submatrices}) and $\tilde{u}$ is defined in (\ref{eq:x_u_tilde}).
\end{lemma}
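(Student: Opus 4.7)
The plan is to reduce the robust input-bound constraint to a single polytope containment and then apply Lemma \ref{lem:hpoly_in_hpoly}, exactly paralleling the argument used for Lemma \ref{lemma:robustness_constraint}. The key observation is that equation (\ref{eq:P:main}) expresses the stacked input trajectory $u$ as an affine function of the stacked uncertainty, so robustness with respect to $(w,v,x_0)$ becomes an image-containment statement between two H-polytopes.

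Concretely, I would first collect the uncertainties into a single vector $\xi = \begin{bmatrix} w^\top & v^\top & x_0^\top \end{bmatrix}^\top$. Its admissible set is the Cartesian product $\W^T\times\V^T\times\X_0$, whose H-representation has precisely the block-diagonal constraint matrix and stacked right-hand side appearing on the left of (\ref{eq:bounded_inputs:1}) and (\ref{eq:bounded_inputs:2}); call this polytope $\A$. From (\ref{eq:P:main}) I get $u = P_u\xi + \tilde{u}$ with $P_u = \begin{bmatrix} P_{uw} & P_{uv} & P_{ux_0}\end{bmatrix}$. The pointwise-in-$t$ constraint $u_t\in\U$ is equivalent to $(I_T\otimes H_\U)u \leq \mathbb{1}_T\otimes h_\U$, i.e., $u\in\U^T$, which substituting for $u$ becomes
\begin{equation*}
(I_T\otimes H_\U)P_u\,\xi \;\leq\; \mathbb{1}_T\otimes h_\U - (I_T\otimes H_\U)\tilde{u}.
\end{equation*}
Let $\B$ denote the polytope in the $\xi$-variable defined by this inequality.

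The requirement that $u_t\in\U$ holds for \emph{all} admissible uncertainties then reads exactly as the inclusion $\A\subseteq\B$. Applying Lemma \ref{lem:hpoly_in_hpoly} to this pair yields the existence of $\Gamma\in\R_+^{Tn_\U\times(T(n_\W+n_\V)+n_{\X_0})}$ satisfying $\Gamma H_\A = (I_T\otimes H_\U)P_u$ and $\Gamma h_\A \leq \mathbb{1}_T\otimes h_\U - (I_T\otimes H_\U)\tilde{u}$, which are precisely (\ref{eq:bounded_inputs:1})–(\ref{eq:bounded_inputs:2}). Both directions follow because Lemma \ref{lem:hpoly_in_hpoly} is an iff statement.

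There is no substantive obstacle beyond bookkeeping: the proof is essentially a rerun of the safety argument with $\performanceSet$ replaced by $\U$, the horizon $\{0,\dots,T\}$ replaced by $\{0,\dots,T-1\}$ (so $I_{T+1}$ and $\mathbb{1}_{T+1}$ become $I_T$ and $\mathbb{1}_T$), and $(P_{zw},P_{zv},P_{zx_0},\tilde{z})$ replaced by $(P_{uw},P_{uv},P_{ux_0},\tilde{u})$. The only mild subtlety worth double-checking is that the constant offset $\tilde{u}$ is correctly moved to the right-hand side before invoking polytope containment, and that the dimensions of $\Gamma$ match $(Tn_\U)\times(T(n_\W+n_\V)+n_{\X_0})$ as stated.
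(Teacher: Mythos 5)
Your proposal is correct and follows essentially the same route as the paper's proof: rewrite the robust constraint as a polytope containment $\W^T\times\V^T\times\X_0\subseteq\mathcal{B}$ in the stacked uncertainty variable using \eqref{eq:P:main}, then invoke Lemma~\ref{lem:hpoly_in_hpoly} to obtain the multiplier $\Gamma$ and the two conditions \eqref{eq:bounded_inputs:1}--\eqref{eq:bounded_inputs:2}. The paper's proof is likewise an explicit rerun of the Lemma~\ref{lemma:robustness_constraint} argument with $(\performanceSet, T+1, \tilde{z})$ replaced by $(\U, T, \tilde{u})$, so there is nothing to add.
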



\begin{remark}
    Note that Lemmas \ref{lemma:robustness_constraint} and \ref{lemma:bounded_inputs} give linear constraints on $P_{zw}$, $P_{zv}$, $P_{zx_0}$, $P_{uw}$, $P_{uv}$ and $P_{ux_0}$ which are linear functions of $Q$ and $r$. Consequently, these constraints are linear in the decision variables $Q$ and $r$. Thus, a feasibility problem which once contained non-convex constraints on the decision variables $F$ and $f$ can be transformed into an equivalent feasibility problem with convex constraints on the decision variables $Q$ and $r$.
\end{remark}

\subsection{Missing measurements and controls revisited}\label{sec:missing_Qr}
The two following Lemmas state how the forbidden measurements constraint (\ref{eq:prohibited:measurement}) and the missing control constraint \eqref{eq:prohibited:control} can be expressed linearly in terms of the new design variables $Q$ and $r$.

\begin{lemma}[Missing measurements]\label{lemma:missing_measurements}
Let a binary measurement signal $\modeSymbol^m \in \zeroOneSet^T$ and let $\{F_{(t,\tau)}\}_{t=0,\ \tau=0}^{T-1,\ t}$ be some control gains. Then, (\ref{eq:prohibited:measurement}) holds for $\tau=0,\dots,T-1$, if and only if
\begin{equation}
    \label{eq:measurement_constraint}
    \modeSymbol^m_\tau=0 \Rightarrow Q_{:,\J_\tau}=0, \text{\ for\ } \tau=0,\dots,T-1,
\end{equation}
where $Q$ is defined by (\ref{eq:def:F}) and (\ref{eq:Q_param:Q}) and where $\J_\tau=1+\tau n_y : (\tau+1) n_y$.
\end{lemma}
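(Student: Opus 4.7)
The plan is to reformulate both the hypothesis (\ref{eq:prohibited:measurement}) and the conclusion (\ref{eq:measurement_constraint}) as statements about the same block-column of a $Tn_u\times Tn_y$ matrix, and then use the bijectivity of the Q-parameterization to move between them. Because $F$ in (\ref{eq:def:F}) is block lower triangular by construction, the blocks $F_{(t,\tau)}$ with $t<\tau$ already vanish automatically; hence the condition ``$F_{(t,\tau)}=0$ for $t=\tau,\dots,T-1$'' is equivalent to the entire block-column of $F$ indexed by $\J_\tau$ being zero, i.e.\ $F_{:,\J_\tau}=0$. The lemma therefore reduces to proving that for each $\tau$, $F_{:,\J_\tau}=0 \Leftrightarrow Q_{:,\J_\tau}=0$.

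For this equivalence I would invoke the inverse Q-parameterization (\ref{eq:Q_param:F}), which reads $F=(I+Q\bar{C}S)^{-1}Q$. Restricting both sides to the column block $\J_\tau$ yields $F_{:,\J_\tau}=(I+Q\bar{C}S)^{-1}Q_{:,\J_\tau}$. The matrix $I+Q\bar{C}S$ is invertible: $Q$ is block lower triangular, while $\bar{C}S$ is \emph{strictly} block lower triangular (its top block row is zero because the top block row of $S$ is, as seen in its explicit form), so $Q\bar{C}S$ is strictly block lower triangular and hence nilpotent. Left-multiplication by the invertible factor $(I+Q\bar{C}S)^{-1}$ preserves the zero status of a column block, which gives the desired equivalence in both directions at once.

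I do not foresee a substantive obstacle; the entire argument collapses to ``invertible matrix times a column block is zero iff the column block is zero'' once the column identification is in place. The only piece of bookkeeping to verify is that the index set $\J_\tau=1+\tau n_y:(\tau+1)n_y$ really picks out the columns of $F$ (and of $Q$) that multiply $y_\tau$ in the control law; this is automatic from the stacking conventions in (\ref{eq:def:f})--(\ref{eq:def:F}) because each $F_{(t,\tau)}$ occupies exactly $n_y$ consecutive columns, and $Q$ inherits the same block structure from its block-lower-triangular form.
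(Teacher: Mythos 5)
Your proposal is correct and follows essentially the same route as the paper's proof: identify the condition with the vanishing of the column block $F_{:,\J_\tau}$, apply $F_{:,\J_\tau}=(I+Q\bar{C}S)^{-1}Q_{:,\J_\tau}$ from (\ref{eq:Q_param:F}), and conclude by invertibility. Your nilpotency argument for why $I+Q\bar{C}S$ is invertible is a welcome detail the paper leaves implicit, but it does not change the approach.
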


\begin{lemma}[Missing controls]\label{lemma:missing_controls}
    Let a binary control signal $\modeSymbol^c\in\zeroOneSet^T$ and let $\{f_t\}_{t=0}^{T-1}$ and $\{F_{(t,\tau)}\}_{t=0,\ \tau=0}^{T-1,\ t}$ be some control offsets and control gains. Then, (\ref{eq:prohibited:control}) holds for all $t=0,\dots,T-1$ if and only if the following indicator constraints hold,
    \begin{equation}\label{eq:control_constraint}
        \text{for\; } t=0,\dots,T-1,\ 
        \modeSymbol^c_t=0 \Rightarrow
        \begin{cases}
        Q_{\I_t,:}=Q_{\I_{t-1},:}, \\
        r_{\I_t}=r_{\I_{t-1}},
        \end{cases}
    \end{equation}
    where $\I_t=1+t n_u : (t+1) n_u$, $Q_{\I_{-1},:}=0$ and $r_{\I_{-1}}=0$.
\end{lemma}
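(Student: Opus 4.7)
The plan is to show that (\ref{eq:prohibited:control}) is equivalent to (\ref{eq:control_constraint}) by reading both conditions at the level of block rows of $F$ and $Q$ (and of blocks of $f$ and $r$), and then using the explicit Q-parameterization maps to transfer the equality between the two sides.

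First, I would observe that (\ref{eq:prohibited:control}), together with the boundary conventions $f_{-1}=0$, $F_{(-1,\tau)}=0$ and the block lower triangular structure of $F$, is exactly the statement ``$\sigma^c_t=0 \Rightarrow F_{\I_t,:} = F_{\I_{t-1},:}$ and $f_t = f_{t-1}$''. The equality for $\tau=t$ collapses to $F_{(t,t)}=0$ because $F_{(t-1,t)}=0$ by block lower triangularity, which is consistent with the requirement that no new measurement is used at time $t$.

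The core of the proof then rests on two block-row identities that follow by restricting the Q-parameterization. From (\ref{eq:Q_param:Q}) I obtain
\[
Q_{\I_t,:} = F_{\I_t,:} (I - \bar{C}SF)^{-1},
\]
and from $(I + Q\bar{C}S)F = Q$, i.e., $F = Q - Q\bar{C}SF$, I obtain
\[
F_{\I_t,:} = Q_{\I_t,:}(I - \bar{C}SF).
\]
Right-multiplying each identity by the appropriate factor immediately gives $F_{\I_t,:} = F_{\I_{t-1},:} \iff Q_{\I_t,:} = Q_{\I_{t-1},:}$. For the offsets, restricting $r = (I + Q\bar{C}S)f$ to its $t$-th block yields $r_{\I_t} = f_t + Q_{\I_t,:}\bar{C}Sf$, so that
\[
r_{\I_t} - r_{\I_{t-1}} = (f_t - f_{t-1}) + (Q_{\I_t,:} - Q_{\I_{t-1},:})\bar{C}Sf,
\]
which, combined with the $F/Q$ block-row equivalence, delivers $f_t = f_{t-1} \iff r_{\I_t} = r_{\I_{t-1}}$.

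The main obstacle I expect is purely bookkeeping: verifying that the boundary conventions $F_{(-1,\tau)}=0$ and $f_{-1}=0$ on the $(F,f)$ side correspond exactly to $Q_{\I_{-1},:}=0$ and $r_{\I_{-1}}=0$ on the $(Q,r)$ side, and handling the $\tau=t$ diagonal block carefully (using block lower triangularity to turn $F_{(t,\tau)}=F_{(t-1,\tau)}$ at $\tau=t$ into $F_{(t,t)}=0$, and similarly for $Q$). Once the indexing is settled, the result is a direct application of the Q-parameterization mapping from Section~\ref{sec:Q_param}.
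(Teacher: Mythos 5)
Your proposal is correct and follows essentially the same route as the paper's proof: both reduce (\ref{eq:prohibited:control}) to block-row equalities of $F$ and blocks of $f$, use the identity $Q_{\I_t,:}-Q_{\I_{t-1},:}=(F_{\I_t,:}-F_{\I_{t-1},:})(I-\bar{C}SF)^{-1}$ with invertibility of $(I-\bar{C}SF)$ for the gain part, and the relation $r_{\I_t}-r_{\I_{t-1}}=(f_{\I_t}-f_{\I_{t-1}})+(Q_{\I_t,:}-Q_{\I_{t-1},:})\bar{C}Sf$ for the offset part. Your extra care with the $\tau=t$ diagonal block and the $t=0$ boundary conventions matches the paper's (terser) treatment.
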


Similar results can be obtained with quadratic invariance. Remark that Lemmas \ref{lemma:missing_measurements} and \ref{lemma:missing_controls} prove that the constraints are linear (and not just convex) in $Q$ and $r$.




\subsection{Main result}\label{sec:mainResult}
Before stating our main result, recall that indicator constraints such that (\ref{eq:measurement_constraint}) and (\ref{eq:control_constraint}) can be handled as mixed integer linear constraints thanks to the Big-M formulation. The Big-M formulation uses the fact that for $\sigma\in\{0,1\}$, the indicator constraint $\sigma=0\Rightarrow f(x)=0$ is equivalent to $-\sigma \bar{M}\leq f(x) \leq \sigma\bar{M}$ for $\bar{M}$ sufficiently large. 

\begin{theorem}\label{thm:asap}
Problem \ref{prob:control_asap} is feasible if and only if the following MILP is feasible.

\begin{problem}
\begin{equation*}
\text{Find}\  \modeSymbol^m,\ \modeSymbol^c,\ Q,\ r,\ \Lambda\geq 0,\ \Gamma\geq 0,
\end{equation*}
such that
\begin{itemize}
\item $\modeSymbol^m_t,\modeSymbol^c_t\in\{0,1\}$ for $t=0,\dots,T-1$,
\item Budget constraints (\ref{eq:budget}) hold,
\item Matrix $Q$ is $n_x\times n_y$ block lower triangular,
\item Safety constraints (\ref{eq:robustness_constraint:1}) and (\ref{eq:robustness_constraint:2}) hold,
\item Bounded input constraints (\ref{eq:bounded_inputs:1}) and (\ref{eq:bounded_inputs:2}) hold,
\item Measurement compatibility constraint (\ref{eq:measurement_constraint}) holds,
\item Control compatibility constraint (\ref{eq:control_constraint}) holds.
\end{itemize}
\end{problem}
Moreover, the feasible values of $\modeSymbol^m$ and $\modeSymbol^c$ are the same for both problems and the feasible $Q$ and $r$ are linked by transformations (\ref{eq:Q_param:F}) and (\ref{eq:Q_param:f}) to the feasible values of $\{f_t\}_{t=0}^{T-1}$ and $\{F_{(t,\tau)}\}_{t=0,\ \tau=0}^{T-1,\ t}$ for Problem \ref{prob:control_asap}.
\end{theorem}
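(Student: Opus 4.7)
The plan is to prove the equivalence by composing the \emph{iff} statements established in Lemmas~\ref{lemma:prohibited}--\ref{lemma:missing_controls}, with the Q-parameterization map \eqref{eq:Q_param:Q}--\eqref{eq:Q_param:r} and its inverse \eqref{eq:Q_param:F}--\eqref{eq:Q_param:f} serving as the bridge between the decision variables $(F,f)$ of Problem~\ref{prob:control_asap} and $(Q,r)$ of the MILP. The chain of reasoning is the same in both directions; only the orientation of each implication differs.

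For the ``only if'' direction, starting from a feasible tuple for Problem~\ref{prob:control_asap}, I would first apply Lemma~\ref{lemma:prohibited} to rewrite the dynamics and controller as \eqref{eq:prohibited_dynamics} subject to the forbidden-gain constraints \eqref{eq:prohibited:measurement}--\eqref{eq:prohibited:control}. Next, construct $Q$ and $r$ from $F$ and $f$ via \eqref{eq:Q_param:Q}--\eqref{eq:Q_param:r}; this is well defined because $\bar{C}SF$ is strictly block lower triangular (as $S$ is), hence nilpotent, and the resulting $Q$ inherits block lower triangularity from $F$. Substituting into \eqref{eq:elsevier1}--\eqref{eq:elsevier2} yields the affine-in-disturbance expressions assembled in \eqref{eq:P:main}. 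The ``only if'' directions of Lemmas~\ref{lemma:robustness_constraint} and \ref{lemma:bounded_inputs} then produce multipliers $\Lambda, \Gamma \geq 0$ satisfying \eqref{eq:robustness_constraint:1}--\eqref{eq:robustness_constraint:2} and \eqref{eq:bounded_inputs:1}--\eqref{eq:bounded_inputs:2}, and the ``only if'' directions of Lemmas~\ref{lemma:missing_measurements}--\ref{lemma:missing_controls} translate \eqref{eq:prohibited:measurement}--\eqref{eq:prohibited:control} into the indicator constraints \eqref{eq:measurement_constraint}--\eqref{eq:control_constraint} on $(Q,r)$. The budget constraints \eqref{eq:budget} and binary constraints on $\modeSymbol^m,\modeSymbol^c$ carry over unchanged.

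For the reverse direction, I would take a MILP-feasible tuple with $Q$ block lower triangular and recover $(F,f)$ via \eqref{eq:Q_param:F}--\eqref{eq:Q_param:f}, where invertibility of $I + Q\bar{C}S$ again follows from nilpotency. The ``if'' directions of Lemmas~\ref{lemma:missing_measurements}--\ref{lemma:missing_controls} produce \eqref{eq:prohibited:measurement}--\eqref{eq:prohibited:control}, so Lemma~\ref{lemma:prohibited} certifies that the trajectories generated by the original system \eqref{eq:system_w_missing_meas} and controller \eqref{eq:control_input} coincide with those of \eqref{eq:prohibited_dynamics}. The ``if'' directions of Lemmas~\ref{lemma:robustness_constraint}--\ref{lemma:bounded_inputs} then deliver $z_t\in\performanceSet$ and $u_t\in\U$ for all admissible uncertainties, and the formulas \eqref{eq:Q_param:F}--\eqref{eq:Q_param:f} give the correspondence between MILP-feasible $(Q,r)$ and Problem~\ref{prob:control_asap}-feasible $(F,f)$ claimed in the moreover clause. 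The main delicate point throughout is the well-definedness and triangularity-preservation of the Q-parameterization in both directions, which reduces to the nilpotency of $\bar{C}SF$ and $Q\bar{C}S$; once this is settled, the theorem follows by a direct composition of the iff lemmas.
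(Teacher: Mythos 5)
Your proposal is correct and follows essentially the same route as the paper: composing the iff statements of Lemmas \ref{lemma:prohibited}--\ref{lemma:missing_controls} through the Q-parameterization \eqref{eq:Q_param:Q}--\eqref{eq:Q_param:f}, with your explicit nilpotency argument for the invertibility of $I-\bar{C}SF$ and $I+Q\bar{C}S$ being a welcome (if implicit in the paper) justification. The only piece you leave unsaid is that the indicator constraints \eqref{eq:measurement_constraint} and \eqref{eq:control_constraint} become mixed-integer \emph{linear} constraints via the Big-M reformulation, which is what makes the resulting problem a MILP rather than merely a mixed-integer feasibility problem.
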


\begin{proof}
From Lemma \ref{lemma:prohibited}, the sequences $\{z_t\}_{t=0}^T$ and $\{u_t\}_{t=0}^{T-1}$ generated by (\ref{eq:system_w_missing_meas}) and (\ref{eq:control_input}) are the same than the ones generated by (\ref{eq:prohibited_dynamics}) when (\ref{eq:prohibited:measurement}) and (\ref{eq:prohibited:control}) hold.

Then, Lemma \ref{lemma:robustness_constraint} indicates that (\ref{eq:prohibited_dynamics}) and the robustness constraint $z_t\in\performanceSet$ for $t=0,\dots,T$ hold if and only if (\ref{eq:robustness_constraint:1}) and (\ref{eq:robustness_constraint:2}) hold. Similarly, Lemma \ref{lemma:bounded_inputs} shows that the constraint $u_t\in\U$ for $t=0,\dots,T-1$ holds if and only if (\ref{eq:bounded_inputs:1}) and (\ref{eq:bounded_inputs:2}) hold.

From Lemma \ref{lemma:missing_measurements}, (\ref{eq:prohibited:measurement}) holds if and only if (\ref{eq:measurement_constraint}) holds, and thanks to Lemma \ref{lemma:missing_controls}, (\ref{eq:prohibited:control}) holds if and only if (\ref{eq:control_constraint}) holds. Finally, the indicator constraints can be handled using the Big-M method.
\end{proof}

Solving such a MILP can be done efficiently on many practical situations using a branch and bound approach. This is illustrated in Section \ref{sec:examples}. In addition, this problem is solved offline, before any measurements are received. Then, the feasible solution can be efficiently used online.




\section{EXAMPLES}\label{sec:examples}

A \textsc{Julia} code using \textsc{Gurobi} \cite{gurobi} generating the figures and implementing the algorithms is available at {\scriptsize\url{https://github.com/kwesiRutledge/measurement-scheduling0/tree/master/examples/}}. The reported computation times have been obtained on a laptop with an \texttt{Intel(R) Core(TM) i5-7267U 3.1 GHz} processor.

\subsection{A Double Integrator Drone System}
\label{sec:drone}

\begin{figure}
\centering
\includegraphics[width=\columnwidth]{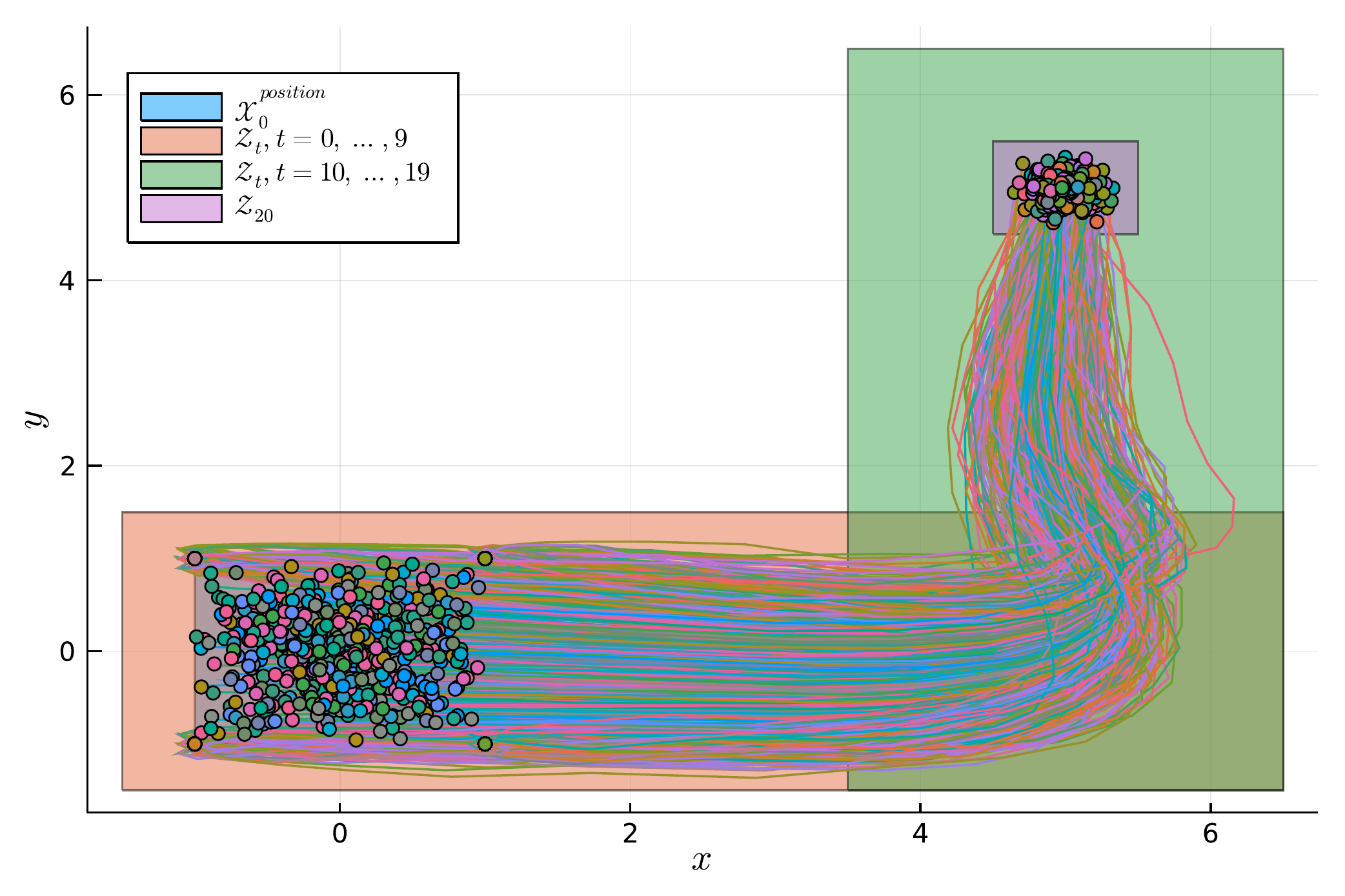}
\caption{Representation of 1000 trajectories $z_t$ sampled from the system described in Section \ref{sec:drone}. The admissible set of initial positions is indicated as $\X_0^{\text{position}}$ and the three different safety sets $\performanceSet_t$ are represented.}
\label{fig:drone_map}
\end{figure}

\begin{figure}
\centering
\includegraphics[width=\columnwidth]{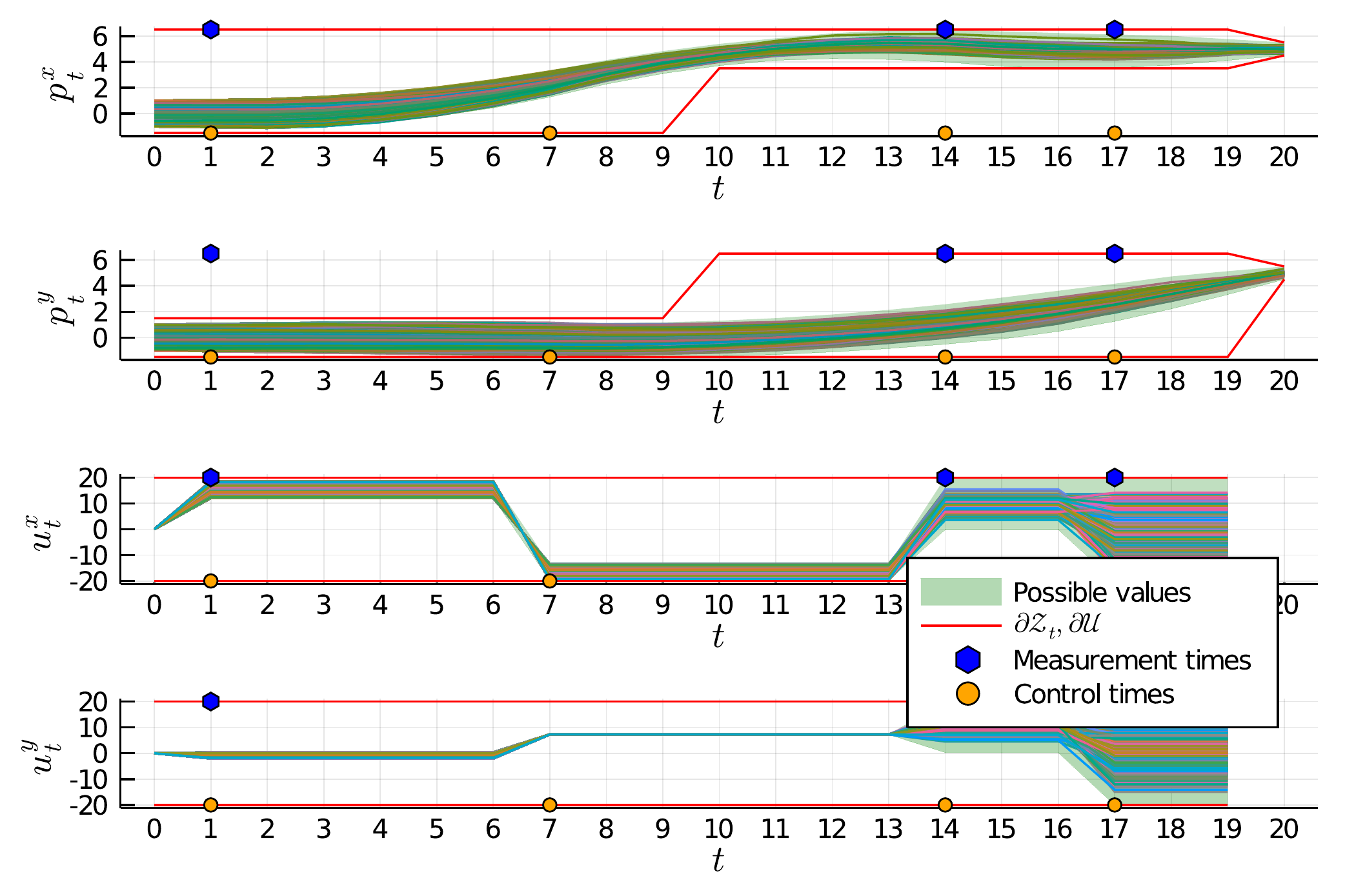}
\caption{Representation of 1000 simulated trajectories ($z_t$ and $u_t$) of the system described in Section \ref{sec:drone}. Measurement and control times are indicated. The boundaries of the safety sets $\partial\performanceSet_t$ and the boundaries of the set of admissible inputs $\partial\U$ are depicted. Finally, the sets of possible values for $z_t$ and $u_t$ for all possible noises are also depicted.}
\label{fig:drone_components}
\end{figure}

The task of remote monitoring of an area (e.g. for the remote reading of pressure guages in power plants) is one of the more recent topics of interest in cyber-physical systems \cite{bharadwaj2018synthesis}. Frequently implemented on collections of robots including drones, quadripeds, and more, remote monitoring controllers must be scalable enough to be implemented on large numbers of systems at once while also avoiding exchanging too many messages and burdening the network. It offers an excellent problem to solve with the methods developed in this paper because each robot's tasks can be encoded as reachability problems or reach-avoid problems (i.e. reach a target set while avoiding an unsafe set) which are readily handled with this method.

If one assumes that a drone's $x$ position $p^x_t$ and $y$ position $p^y_t$ are controlled by a simple force input, then the following dynamics may be written:
$$
\begin{array}{l}
    \ddot{p}^x_t = u^x_t, \\
    \ddot{p}^y_t = u^y_t.
\end{array}
$$
The state and the input of the system are respectively
$$
x_t = 
\begin{bmatrix}
    p^x_t & p^y_t & \dot{p}^x_t & \dot{p}^y_t
\end{bmatrix}^\top,\; u_t=
\begin{bmatrix}
u^x_t & u^y_t
\end{bmatrix}^\top,
$$
and the dynamics is
$$
\dot{x}_t =
\begin{bmatrix}
    0_{2\times 2} & I_2 \\
    0_{2\times 2} & 0_{2\times 2}
\end{bmatrix} x_t 
+
\begin{bmatrix}
    0_{2\times 2} \\ I_{2}
\end{bmatrix}
u_t.
$$
Finally, the output variable and the measurements are
$$
z_t=\begin{bmatrix}
p^x_t & p^y_t
\end{bmatrix}^\top,\; y_t=z_t+v_t.
$$

We consider an exactly discretized version of this system with a discretization time step of $0.1$. In addition, a process noise is considered. The time horizon is $T=20$ time steps with $N_m=3$ measurements and $N_c=4$ control inputs. In addition, we define $\W=[-0.05,0.05]^4$, $\V=[-0.05,0.05]^2$, $\X_0=[-1,1]^2\times \{0\}^2$ and $\U=[-20,20]^2$. Finally, the safety set $\performanceSet_t$ is time varying (see Remark \ref{remark:problemExtensions}) and takes 3 different values for respectively $t=0,\dots,9$; $t=10,\dots,19$; and $t=20$. They are represented in Fig.~\ref{fig:drone_map}. This problem is solved in 292 seconds.

Fig.~\ref{fig:drone_components} shows 1000 simulated trajectories. For half of them, the $x_0$, $w_t$ and $v_t$ are uniformly sampled in the polytopes. For the other half of the trajectories, each uncertain variable is randomly sampled among the vertices of the respective noise polytopes in order to promote extreme trajectories. The measurement and control times are indicated, as well as the boundaries of the safety sets $\partial\performanceSet_t$ and the boundaries of admissible inputs sets $\partial\U$. In addition, the set of possible values for $z_t$ and $u_t$ for all admissible uncertainty $w_t$, $v_t$ and $x_0$ are represented. The same trajectories are depicted in Fig.~\ref{fig:drone_map} where the set of admissible initial positions of the drone is indicated (and written $\X_0^{\text{position}}$), in addition to the time-varying safety sets.

We can verify that all simulated trajectories respect the constraints $z_t\in\Z$ and $u_t\in\U$. In addition, the measurement and control times are not regularly spaced. In fact, if we impose the measurement and control times to be spaced in time as regularly as possible, i.e., $\modeSymbol^m_t=1$ for $t\in\{0,10,19\}$ and $\modeSymbol^c_t=1$ for $t\in\{0,6,13,19\}$, then the problem is infeasible. 

Finally, the left part of Table \ref{tab:scalingAnalysis} presents the evolution of the solver time with respect to the number of drones to be controlled in parallel. The state dimension $n_x$ is also indicated. As expected, the solving time grows with the state dimension. Note that for a state space in 48 dimensions, the problem can be solved in less than $50$ minutes. In addition, these computations are executed offline which make the computation time not critical.

\begin{table}
\centering
\caption{Left: Problem \ref{prob:control_asap} is solved for the system described in Section \ref{sec:drone} with time horizon $T=20$ with a varying number of drones (and thus with varying state dimension $n_x$).  Right: Problem \ref{prob:control_asap} is solved for the system described in Section \ref{sec:lipm} with a varying time horizon $T$ (and the number of measurement and control times schedulings $\sigma^m$ and $\sigma^c$) with $N_m=N_c=T/2$ measurements and controls.}
\begin{tabular}{|c|c|c|}
\hline
Number & State & Solver\\
of Drones & Dimension & Time (s)\\
\hline
\hline
1 & 4 & 275.88 \\
\hline
4 & 16 & 332.74 \\
\hline
8 & 32 & 1118.96 \\
\hline
12 & 48 & 2846.47 \\
\hline
\end{tabular}$\ $
\begin{tabular}{|c|c|c|}
\hline
$T$ & Number & Solver\\
    & Schedules & Time (s) \\
\hline
\hline
2 & 4 & 0.03 \\
\hline
10 & 63504 & 0.81 \\
\hline
20 & $3.41\cdot 10^{10}$ & 19.96 \\
\hline
30 & $2.41\cdot 10^{16}$ & 3284.71 \\
\hline
\end{tabular}
\label{tab:scalingAnalysis}
\end{table}

\subsection{Planar Linear Inverted Pendulum Model (Simplified Walker Dynamics) \cite{posa2017balancing}}\label{sec:lipm}

\begin{figure}
\centering
\includegraphics[width=\columnwidth]{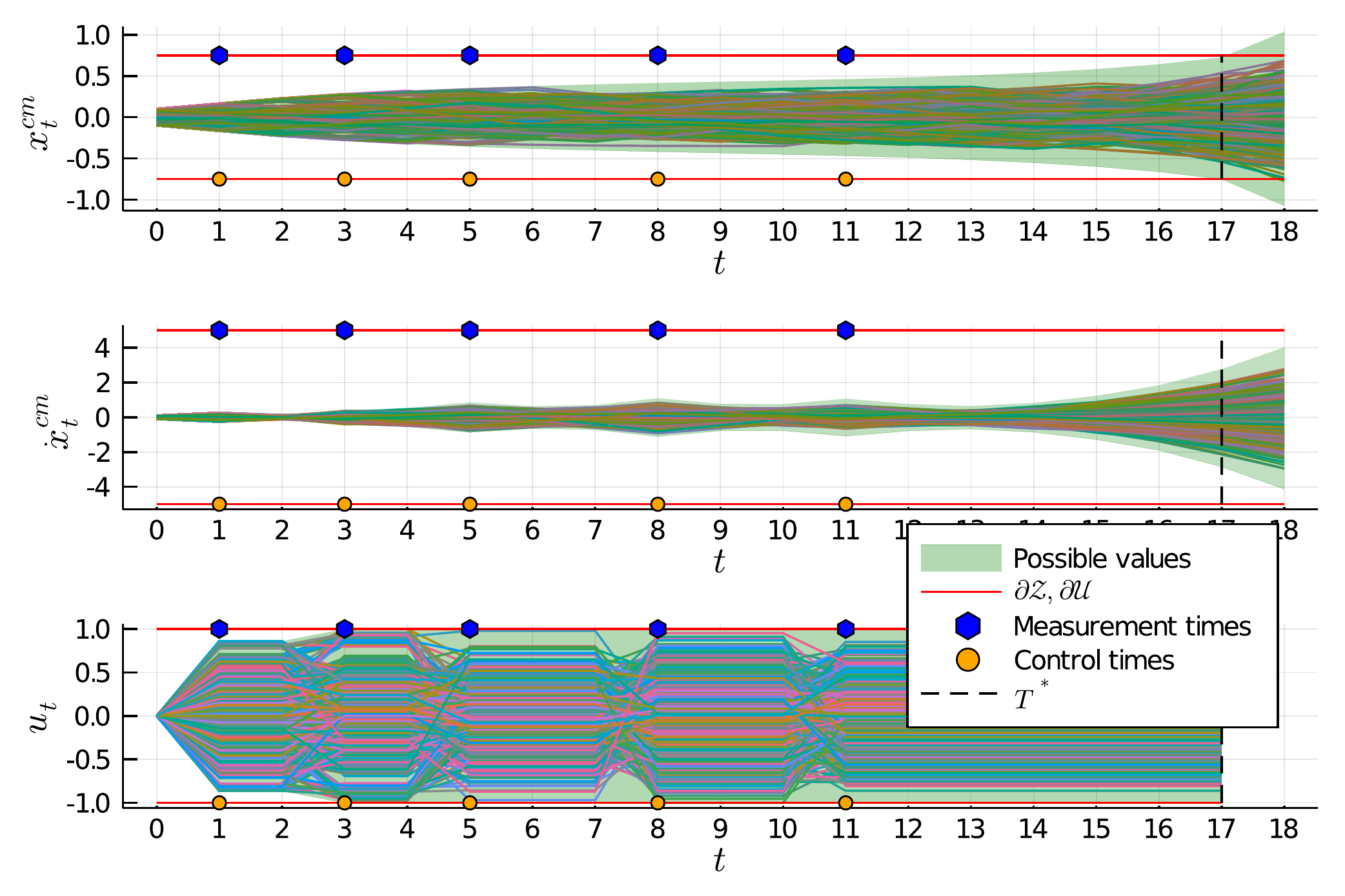}
\caption{Representation of 1000 simulated trajectories of system \eqref{eq:lipm:dynamics}. Measurement and control times are indicated. The boundaries of the safety set $\partial\performanceSet$ and the boundaries of the set of admissible inputs $\partial\U$ are depicted. Finally, the sets of possible values for $z_t$ and $u_t$ for all possibles noises are also depicted.}
\label{fig:lipm_components}
\end{figure}

The task of walking is one which typically does not require constant observation in order to safely execute. Humans routinely walk without knowledge of the exact position of their foot or angle of their torso, yet most controllers for walking robots require full state observation at all times.

To show the utility of our method, we analyze a small example where a robot model (the planar linear inverted pendulum) can be safely controlled by a policy which infrequently observes the state.

The planar linear inverted pendulum model as discussed in \cite{posa2017balancing} can be represented as a state space model with the following equations:
\begin{equation}\label{eq:lipm:dynamics}
    \ddot{x}^{cm} = \frac{g}{\bar{z}^{cm}} (x^{cm} + r^{foot} u),
\end{equation}
where $x^{cm}$ is the lateral position of the pendulum's center of mass, $\bar{z}^{cm}$ is the height of the pendulum's center of mass (which is assumed to be constant), $r^{foot}$ is the radius of the foot, and $u$ controls the center of pressure on the foot. We consider $\bar{z}^{cm}=1$, $r^{foot}=0.5$ and $g=9.81$.

We solve Problem \ref{prob:control_alap} for an exactly discretized version of \eqref{eq:lipm:dynamics} with a $0.1$ discretization time step. We have $x_t=\begin{bmatrix} x^{cm}_t & \dot{x}^{cm}_t \end{bmatrix}^\top$, $y_t=x_t$ and $z_t=x_t+v_t$. We consider $\W=[-0.05,0.05]^2$, $\V=[-0.01, 0.01]^2$, $\X_0=[-0.1,0.1]^2$, $\U=[-1,1]$, $\performanceSet=[-0.75,0.75]\times [-5,5]$, $N_m=N_c=5$. We consider the upper bound $\bar{T}=20$. This problem is solved in 284 seconds. The largest $T$ for which the problem is feasible is $T^*=17$. Fig.~\ref{fig:lipm_components} presents the obtained trajectories. For half of them, the uncertainties are randomly selected among the vertices of the polytopes to promote extreme trajectories.

For the same system, the right part of Table \ref{tab:scalingAnalysis} presents the evolution of the solving time for Problem \ref{prob:control_asap} according to the time horizon $T$ when the number of measurement and control times is $N_m=N_c=T/2$. In addition, the number of measurement and control scheduling is indicated (assuming that the budget constraints \eqref{eq:budget} are tight). This table shows that the solving time increases with the time horizon but much slower than the number of possible measurement and control times scheduling. Again, these computations are performed offline.

\begin{remark}
    Although these time horizons may be modest for many applications, a long time horizon $T'$ can be decomposed into multiple ``components" (e.g., $T_1 + T_2 = T'$) where smaller problems are solved with appropriate initial, intermediate conditions and budget constraints to derive a result for the long time horizon $T'$.
\end{remark}

\section{CONCLUSIONS AND FUTURE WORKS}\label{sec:conclusion_futureWorks}

In this paper, we have addressed the problem of co-designing a linear controller with memory, a measurement schedule, and a control schedule that guarantees safety. By proving that such a problem is equivalent to a MILP, we make this problem computationally tractable. This is illustrated on two examples.

For our future research, we want to adapt this method to the case where polytopes are zonotopes. In this case, we hope to reduce the computation time using methods such as zonotope order reduction techniques \cite{yang2018comparison}. Furthermore, we plan to develop an online version of this algorithm, where observation times are computed on the fly, while the system is running, and thus leveraging the current observations at each time step. In such a formulation, the problem will be solved again after each measurement in order to incorporate the information recently acquired.



\bibliographystyle{IEEEtran}
\bibliography{refs}

\appendices
\section{Proofs}\label{sec:proofs}

\subsection{Proof of Lemma \ref{lemma:prohibited}}

If $\modeSymbol^c_t=1$, one can use (\ref{eq:prohibited:measurement}) to write
$$
f_t+\sum_{\tau\leq t \text{\ s.t.\ }\modeSymbol^m_\tau=1 } F_{(t,\tau)} y_\tau = f_t + \sum_{\tau\leq t} F_{(t,\tau)} y_\tau,
$$
then \eqref{eq:control_input} and \eqref{eq:prohibited_dynamics:u} give the same $u_t$. On the other hand, if $\modeSymbol^c_t=0$, one can use \eqref{eq:prohibited:control} to write
\begin{align*}
f_t+\sum_{\tau\leq t}F_{(t,\tau)} y_\tau &= f_{t-1}+\sum_{\tau\leq t}F_{(t-1,\tau)} y_\tau \\
    &= f_{t-1}+\sum_{\tau\leq t-1}F_{(t-1,\tau)} y_\tau.
\end{align*}
then \eqref{eq:control_input} and \eqref{eq:prohibited_dynamics:u} give the same $u_t$.

It follows that the sequences of $u_t$ are the same. But then, so it is for the sequences of $x_t$ and finally, for the sequences of $z_t$.

\subsection{Proof of Lemma \ref{lemma:robustness_constraint}}

By writing $\eta=\begin{bmatrix} w^\top	&	v^\top  & x_0^\top \end{bmatrix}^\top\in\W^T\times\V^T\times\X_0$ and $P_{z,:}=\begin{bmatrix} P_{z w} & P_{z v} & P_{z x_0} \end{bmatrix}$ and using \eqref{eq:P:main}, we can write
$$
\begin{array}{lrcl}
&z_t & \in& \performanceSet \text{\ for\ all\ } t=0,\dots,T\\
\Leftrightarrow&H_\performanceSet z_t &\leq& h_\performanceSet \text{\ for\ all\ } t=0,\dots,T\\
\Leftrightarrow&\left(I_{T+1}\otimes H_\performanceSet\right) z &\leq& \mathbb{1}_{T+1}\otimes h_\performanceSet\\
\Leftrightarrow&\left(I_{T+1}\otimes H_\performanceSet\right)(P_{z,:}\eta+\tilde{z}) &\leq& \mathbb{1}_{T+1}\otimes h_\performanceSet\\
\Leftrightarrow&\left(I_{T+1}\otimes H_\performanceSet\right)P_{z,:}\eta &\leq& \mathbb{1}_{T+1}\otimes h_\performanceSet\\
&&&\hspace{1cm}- \left(I_{T+1}\otimes H_\performanceSet\right)\tilde{z}.
\end{array}
$$
The last inequality can be interpreted as requiring that $\eta$ is in a polyhedron (let's call it $\mathcal{P}$). Then, $z_t\in\performanceSet$ for all $t=0,\dots,T$ and for all $\eta\in\W^T\times\V^T\times\X_0$ if and only if $\eta\in\mathcal{P}$ for all $\eta$. This is equivalent to $\W^T\times\V^T\times\X_0 \subseteq \mathcal{P}$. Then, using Lemma \ref{lem:hpoly_in_hpoly}, the constraint $z_t\in\performanceSet$ for all $t=0,\dots,T$ and for all $\eta\in\W^T\times\V^T\times\X_0$ holds if and only if there exists $\Lambda\in\mathbb{R}^{(T+1)n_\performanceSet\times (T(n_\W+n_\V)+n_{\X_0})}_+$ such that (\ref{eq:robustness_constraint:1}) and (\ref{eq:robustness_constraint:2}) hold.

\subsection{Proof of Lemma \ref{lemma:bounded_inputs}}

The proof is similar to the one of Lemma \ref{lemma:robustness_constraint}. Let $\eta=\begin{bmatrix} w^\top & v^\top & x_0^\top \end{bmatrix}^\top\in\W^T\times\V^T\times\X_0$ and $P_{u,:}=\begin{bmatrix} P_{u w} & P_{u v} & P_{u x_0} \end{bmatrix}$ and using \eqref{eq:P:main}, we can write
$$
\begin{array}{lrcl}
&u_t & \in& \U \text{\ for\ all\ } t=0,\dots,T-1\\
\Leftrightarrow&H_\U u_t &\leq& h_\U \text{\ for\ all\ } t=0,\dots,T-1\\
\Leftrightarrow&\left(I_{T}\otimes H_\U\right) u &\leq& \mathbb{1}_{T}\otimes h_\U\\
\Leftrightarrow&\left(I_{T}\otimes H_\U\right)(P_{u,:}\eta+\tilde{u}) &\leq& \mathbb{1}_{T}\otimes h_\U\\
\Leftrightarrow&\left(I_{T}\otimes H_\U\right)P_{u,:}\eta &\leq& \mathbb{1}_{T}\otimes h_\U- \left(I_{T}\otimes H_\U\right)\tilde{u}.
\end{array}
$$
The last inequality can be interpreted as requiring that $\eta$ is in a polytope (let's call it $\mathcal{Q}$). Then, $u_t\in\U$ for all $t=0,\dots,T-1$ and for all $\eta\in\W^T\times\V^T\times\X_0$ if and only if $\eta\in\mathcal{Q}$ for all $\eta$. This is equivalent to $\W^T\times\V^T\times\X_0 \subseteq \mathcal{Q}$. Then, using Lemma \ref{lem:hpoly_in_hpoly}, the constraint $u_t\in\U$ for all $t=0,\dots,T-1$ and for all $\eta\in\W^T\times\V^T\times\X_0$ holds if and only if there exists $\Gamma\in\mathbb{R}^{Tn_\U\times (T(n_\W+n_\V)+n_{\X_0})}_+$ such that (\ref{eq:bounded_inputs:1}) and (\ref{eq:bounded_inputs:2}) hold.

\subsection{Proof of Lemma \ref{lemma:missing_measurements}}

Let $\tau\in\{0,\dots,T-1\}$ and assume $\modeSymbol^m_\tau=0$. From (\ref{eq:def:F}), we have that for a fixed $\tau$, the matrices $F_{(t,\tau)}$ are in the same columns of $F$ for all $t$. The indices of these columns are $\J_\tau$. Then, $F_{(t,\tau)}=0$ for all $t=\tau,\dots,T-1$, if and only if $F_{:,\J_\tau}=0$. Then, looking at (\ref{eq:Q_param:F}), one can write
$$
F_{:,\J_\tau}=(I+Q\bar{C}S)^{-1}Q_{:,\J_\tau}.
$$
Observing that $(I+Q\bar{C}S)^{-1}$ is invertible, it follows that $F_{:,\J_\tau}=0$ if and only if $Q_{:,\J_\tau}=0$.

\subsection{Proof of Lemma \ref{lemma:missing_controls}}

Let $t\in\{1,\dots,T-1\}$ and assume that $\modeSymbol^c_t=0$. From \eqref{eq:def:f} and \eqref{eq:def:F}, the constraints $f_t = f_{t-1}$ and $F_{(t,\tau)} = F_{(t-1,\tau)}$ can be written $f_{\I_t}=f_{\I_{t-1}}$ and $F_{\I_t,:}=F_{\I_{t-1},:}$.

Otherwise, from (\ref{eq:Q_param:Q}), one can write
\begin{equation}\label{eq:lemma:equivalenceQandF}
Q_{\I_t,:}-Q_{\I_{t-1},:}=(F_{\I_t,:}-F_{\I_{t-1},:})(I-\bar{C}SF)^{-1}.
\end{equation}
But $(I-\bar{C}SF)^{-1}$ is invertible so $F_{\I_t,:}-F_{\I_{t-1},:}=0_{n_u\times Tn_y}$ if and only if $Q_{\I_t,:}-Q_{\I_{t-1},:}=0$.

Finally, thanks to (\ref{eq:Q_param:r}), one can write
$$
r_{\I_t}-r_{\I_{t-1}}=f_{\I_t}-f_{\I_{t-1}}+(Q_{\I_t,:}-Q_{\I_{t-1},:})\bar{C}Sf,
$$
where, thanks to the observation following (\ref{eq:lemma:equivalenceQandF}), the last term is cancelled in both the necessary and the sufficient cases, i.e., when $Q_{\I_t,:}-Q_{\I_{t-1},:}=0$ and when $F_{\I_t,:}-F_{\I_{t-1},:}=0$. It leads to $F_{\I_t,:}-F_{\I_{t-1},:}=0$ and $f_{\I_t}-f_{\I_{t-1}}=0$ if and only if $Q_{\I_t,:}-Q_{\I_{t-1},:}=0$ and $r_{\I_t}-r_{\I_{t-1}}=0$.

The case $t=0$ is similar, indeed, consider that $u_{-1}, f_{-1}, F_{(-1,\tau)}$ and all quantities indexed by $\I_{-1}$ are zeros with compatible dimensions.

\end{document}